\theoremstyle{plain}
\newtheorem{thm}{Theorem}[section]
\newtheorem{lemma}[thm]{Lemma}
\theoremstyle{definition}
\newtheorem{defn}{Definition}[section]
\theoremstyle{remark}
\numberwithin{equation}{section}
\begin{document}
\title[The fractional S-transform on BMO and Hardy spaces]{The fractional S-transform on BMO and Hardy spaces}
\author[B. Kalita and S. K. Singh]{Baby Kalita$^1$ and  Sunil Kumar Singh$^{2,\ast}$}
\date{}
\begin{abstract} In this paper, the authors studied the fractional S-transform on BMO and Hardy spaces and generalized the results given in \cite{Singh15h}. In introduction section, the definitions of the S-transform, fractional Fourier transform and fractional S-transform are given. In Section 2 continuity and boundedness results for the fractional S-transform in BMO and Hardy spaces are obtained. Furthermore, in Section 3 the fractional S-transform is studied on the  weighted BMO$_{\kappa}$ and weighted Hardy spaces associated with a tempered weight function. 
\end{abstract}
\maketitle
\thispagestyle{empty}
\noindent \textbf{Key words}: S-transform, Fractional S-transform, BMO space, Hardy space \\
\textbf{2010 AMS Subject Classification}: 65R10; 32A37; 30H10\\
\section{Introduction}
The S-transform is a new time-frequency analysis method, which is deduced from short-time Fourier transform and an extension of wavelet transform. It was first introduced by Stockwell et al.\cite{Stockwell96} in 1996 for analyzing geophysics data and since then has been applied in several discipline, such as geophysics, oceanography, atmospheric physics, medicine, hydrogeology. The continuous S-transform of a function $f$ with respect to the window function $\omega$ is defined as \cite{Ventosa08}
\begin{equation}
\label{eq:Int-1}
 (S_\omega f)(\tau,\xi)=\int_{\mathbb{R}} f(t) \, \omega(\tau-t,\xi) \, e^{-i2\pi \xi t} \, dt, 
 ~\text{for}~ \tau,\xi \in \mathbb{R},
\end{equation}
where the window $\omega$ is assumed to satisfy the following:
\begin{equation}
\int_{\mathbb{R}} \omega(t,\xi)dt=1 ~ \text{for all}~ \xi \in \mathbb{R}_0 := \mathbb{R} \setminus \{0\}.
\end{equation}
The most usual window $\omega $ is a Gaussian one
 \begin{equation}
 \omega(t,\xi)= \frac{\left|\xi\right|}{k\sqrt{2\pi}}\,e^{-\xi^2t^2/2k^2}, k>0,
\end{equation}
where $\xi$ is the frequency, $t$ is the time variable, and $k$ is a scaling factor that controls the number of oscillations in the window.\par
Equation(\ref{eq:Int-1}) can be rewritten as a convolution 
\begin{equation}
\label{eq:Int-2}
 (S_\omega f)(\tau,\xi) = \left(f(\cdot)e^{-i2\pi \xi \cdot}\ast\omega(\cdot,\xi)\right)(\tau).
\end{equation} 
Applying the convolution property for the Fourier transform in (\ref{eq:Int-2}), we can obtain a direct relation between S-transform and Fourier transform as follows:
\begin{equation}
\label{eq:Int-3}
 (S_\omega f)(\tau,\xi) = \mathscr{F} ^{-1}\left\{\hat{f}(\cdot +\xi)~\hat{\omega}(\cdot,\xi)\right\}(\tau),
\end{equation}
where $\hat {f}(\eta) = (\mathscr{F}f)(\eta)= \int_{\mathbb{R}} f(t)~ e^{-i2\pi \eta t} dt$ is the  Fourier transform of $f$. Certain examples and basic properties of the S-transform can be found in \cite{Singh12s,Singh12w,Singh12t,Singh13s,Singh13w,Singh13ss,Singh13d,Singh15b}.
\subsection{The fractional Fourier transform}
The fractional Fourier transform was introduced by Almeida \cite{Almeida94} as a generalization of classical Fourier transform. It has several known applications in the signal processing and many other scientific fields. The $a^{th}$ order FRFT (fractional Fourier transform) of a signal $f$ is defined as
\begin{equation}
\label{eq:2.1}
F^a_f(\xi)= \int_{\mathbb{R}} f(t) \, K_a(t,\xi) \, dt,
\end{equation}
where the transform kernel is given by 
\begin{equation}
\label{ffk}
K_a(t,\xi)=
\begin{cases}
A_{\theta}\,e^{i{\pi}({\xi}^2 \cot \theta - 2\xi t \csc \theta +t^2 \cot \theta)},& \text{if $\theta \neq j\pi$}\\
\delta(t-\xi),& \text{if $\theta = 2j\pi$}\\
\delta(t+\xi),& \text{if $\theta+\pi =2j\pi$},
\end{cases}
\end{equation}
where $A_{\theta}= \sqrt{1-i\cot \theta}, \theta= a\frac{\pi}{2}$, $a \in [0,4)$, $i$ is the complex unit, $j $ is an integer and $\xi$ is the fractional Fourier frequency. The inverse FRFT of equation (\ref{eq:2.1}) is 
\begin{equation*}
f(t)= \int_{\mathbb{R}}F^a_f(\xi) \overline{K_a(t,\xi)} d\xi.
\end{equation*}
\subsection{The fractional S-transform}
The fractional S-transform is used by Xu et.al.\cite{Xu12} in 2012 as a generalization of the S-transform. The $a^{th}$ order continuous fractional S-transform (FRST) of a signal $f(t)$ is defined as
\begin{equation}
FRST^a_f(\tau,\xi)= \int_{\mathbb{R}} f(t) g(\tau-t,\xi) K_a(t,\xi) dt,
\end{equation}
where the window function $g$ is given by 
\begin{equation}
\label{eq:3.1}
g(t,\xi)= \frac{\left|\xi \csc \theta\right|^p}{k\sqrt{2\pi}} e^{-{t^2(\xi \csc \theta})^{2p}/2k^2}~~; \,k,p >0,
\end{equation}
which satisfies the condition:
\begin{equation}
\label{eq:3.2}
\int_{\mathbb{R}}g(t,\xi)dt=1 , ~\text{for all}~ \xi \in \mathbb{R}_0.
\end{equation}
Inverse fractional S-transform is defined as 
\begin{equation*}
f(t)=\int_{\mathbb{R}}\left[ \int_{\mathbb{R}}FRST^a_f(\tau,\xi) \, d\tau \right]\overline{K_a(t,\xi)} d\xi.
\end{equation*}
The fractional S-transform depends on a parameter $\theta$ and can be interpreted as a rotation by an angle  $\theta$ in the time-frequency plane. The parameter $p$ and $k$ can be used to adjust the window function space. The fractional S-transform has been studied on distribution spaces by Singh \cite{Singh12t, Singh13s, Singh13w}.
\section{The fractional S-transform  on BMO and Hardy  spaces}
The bounded mean oscillation space $BMO(\mathbb{R})$ also known as John-Nirenberg space was first introduced by F. John and L. Nirenberg in 1961\cite{John}. It is the dual space of the real Hardy space $H^1$ and serves in many ways as a substitute space for $L^\infty$. The $BMO(\mathbb{R})$ spaces play an important role in various areas of mathematics, for example many operators which are bounded on $L^p$, $1 < p < \infty$, but not on $L^\infty$, are bounded when considered as operators on $BMO$.\par
Now, we recall the definitions of the BMO and Hardy spaces. 
\begin{defn}
The bounded mean oscillation space $BMO(\mathbb{R})$ is defined in \cite[p. 140]{Stein93} as the space of all  Lebesgue integrable (locally) functions defined on  $\mathbb{R}$ such that 
\begin{equation}
\parallel f\parallel _{BMO}= \sup_{I\subset{\mathbb{R}}} {\frac{1}{|I|}}\int _I |f(x)-f_{I}|\,dx < \infty,
\end{equation}
here the supremum is taken over all intervals $I$ in $\mathbb{R}$ of measure $|I|$ and $f_I$ stands for the mean of $f$ on $I$, namely 
\begin{equation}
\label{eq:Int-4.2}
f_{I}:=\frac{1}{|I|}\int_I f(x)\,dx \leq \frac{1}{|I|}\int_I |f(x)|\,dx \leq \,m <\, \infty.
\end{equation}
\end{defn}
\begin{defn}
\label{def:2.2}
The Hardy space is defined in \cite[pp. 90-91]{Stein93} as the space of all functions $ f \in L^1( \mathbb{R})$ such that
\begin{equation}
\label{eq:5.1}
\parallel f\parallel _{H^1}=\int_{\mathbb{R}} \sup_{t>0} |\left(f\ast\phi_t\right)(x)|\,dx < \infty,
\end{equation}
where $\phi$ is any test function with $\int _{\mathbb{R}} \phi(x) dx \neq 0$ and 
$\phi_t(x)=t^{-1}\phi(x/t)$;  $t>0$, $x\in {\mathbb{R}}$.
\end{defn}
Our main results in this section are as follows.
\begin{lemma}
If $K_a(x,\xi)$ is the transform kernel defined in (\ref{ffk}) and $\theta \neq j\pi$, then for any Lebesgue integrable (locally) function defined on $\mathbb{R}$ we have
\begin{equation*}
\left\|f(\cdot) K_a(\cdot, \xi)\right\|_{BMO}\leq |A_\theta| \left(\left\|f\right\|_{BMO}+2m\right)
\end{equation*}
where $m$ is a constant given in equation (\ref{eq:Int-4.2}).
\end{lemma}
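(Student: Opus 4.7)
The plan is to exploit the fact that although the kernel $K_a(x,\xi)$ oscillates in $x$, its modulus is the constant $|A_\theta|$ (since the exponent in (\ref{ffk}) is purely imaginary). This constant size is what one needs to convert estimates against $|K_a|$ into the claimed constant $|A_\theta|$, while the oscillation is absorbed entirely on the ``mean'' side via the average bound $m$ from (\ref{eq:Int-4.2}).

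Fix an interval $I\subset\mathbb{R}$. The key algebraic step is to insert $f_I K_a(x,\xi)$ and split
\begin{equation*}
f(x)K_a(x,\xi)-\bigl(fK_a\bigr)_I=\bigl(f(x)-f_I\bigr)K_a(x,\xi)+\bigl(f_I K_a(x,\xi)-(fK_a)_I\bigr),
\end{equation*}
then apply the triangle inequality inside the average $\tfrac{1}{|I|}\int_I|\,\cdot\,|\,dx$. The first summand is handled by pulling out $|K_a(x,\xi)|=|A_\theta|$ and recognizing the BMO integrand of $f$, giving the contribution $|A_\theta|\,\|f\|_{BMO}$ after taking the supremum over $I$.

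For the second summand, I would estimate pointwise by
\begin{equation*}
\bigl|f_I K_a(x,\xi)-(fK_a)_I\bigr|\leq |f_I|\,|A_\theta|+\bigl|(fK_a)_I\bigr|\leq |A_\theta|\,|f_I|+|A_\theta|\cdot\frac{1}{|I|}\int_I|f(y)|\,dy,
\end{equation*}
and then apply the bound (\ref{eq:Int-4.2}) twice to dominate both $|f_I|$ and the average of $|f|$ on $I$ by the constant $m$. This gives the pointwise estimate $2m|A_\theta|$, which survives averaging and the supremum over $I$.

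Combining the two contributions yields $\|fK_a(\cdot,\xi)\|_{BMO}\leq|A_\theta|(\|f\|_{BMO}+2m)$. There is no genuine obstacle; the only point requiring care is the insertion trick with $f_I K_a$, chosen so that neither the ``difference from the mean of $f$'' nor the ``mean of $fK_a$'' requires knowledge of how $K_a$ varies in $x$ beyond its modulus. The oscillation of $K_a$ is what prevents one from simply pulling the kernel outside and getting a cleaner bound of just $|A_\theta|\,\|f\|_{BMO}$; the extra $2m|A_\theta|$ term is the price paid for that oscillation.
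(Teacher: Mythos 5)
Your proposal is correct and follows essentially the same route as the paper: both insert the term $f_I\,K_a(x,\xi)$ into the BMO integrand, bound the resulting oscillation term by $|A_\theta|\,\|f\|_{BMO}$, and control the two remaining mean terms by $|A_\theta|\,m$ each via the bound in (\ref{eq:Int-4.2}). The only cosmetic difference is that you group the two mean terms before applying the triangle inequality, whereas the paper splits them immediately.
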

\begin{proof}
By using the inequality $|K_a(x,\xi)|\leq |A_\theta|$, we have
\begin{equation*}
\begin{split}
&\left\|f(\cdot)K_a(\cdot, \xi)\right\|_{BMO}\\
&=\sup_{I\subset{\mathbb{R}}}\frac{1}{|I|}\int_{I} \bigg| f(x) K_a(x, \xi)-\frac{1}{|I|}\int_{I} f(t) K_a(t, \xi) dt\bigg|dx\\
&=\sup_{I\subset{\mathbb{R}}}\frac{1}{|I|}\int_{I} \bigg| f(x) K_a(x, \xi)-\frac{K_a(x, \xi)}{|I|}\int_I f(t) dt 
+ \frac{K_a(x, \xi)}{|I|}\int_I f(t) dt\\
& ~ -\frac{1}{|I|}\int_{I} f(t) K_a(t, \xi)  dt\bigg|dx\\
&\leq  \sup_{I\subset{\mathbb{R}}}\frac{1}{|I|}\int_{I}\left| K_a(x, \xi)\left(f(x)-\frac{1}{|I|}\int_I f(t) dt\right)\right|dx\\
& ~+\sup_{I\subset{\mathbb{R}}}\frac{1}{|I|}\int_{I}\left|\frac{K_a(x, \xi)}{|I|}\int_I f(t) dt\right|dx
 +\sup_{I\subset{\mathbb{R}}}\frac{1}{|I|}\int_{I}\left|\frac{1}{|I|}\int_{I} f(t) K_a(t, \xi) dt\right|dx\\
& \leq  |A_\theta|\left\|f\right\|_{BMO} + |A_\theta| \sup_{I\subset{\mathbb{R}}}\frac{1}{|I|}\int_{I}\left|f\right|_I dx
 + |A_\theta| \sup_{I\subset{\mathbb{R}}}\frac{1}{|I|}\int_{I} \frac{1}{|I|}\int_{I} |f(t)| dt dx\\
&\leq  |A_\theta|\left(\left\|f\right\|_{BMO} + \frac{1}{|I|}m|I| + \sup_{I\subset{\mathbb{R}}}\frac{1}{|I|}\int_{I} m \, dx \right)\\
&\leq |A_\theta|\left( \left\|f\right\|_{BMO}+\frac{1}{|I|}m|I|+\frac{1}{|I|}m|I|\right)\\
&= |A_\theta|\left(\left\|f\right\|_{BMO} + 2m\right).
\end{split}
\end{equation*}
\end{proof}

\begin{thm}
\label{thm-4.1}
 For any fixed  $\xi \in \mathbb{R}_0$ and $\theta \neq j\pi$, the operator $FRST^a_f: BMO(\mathbb{R})\to BMO(\mathbb{R})$ is continuous and 
\begin{equation*}
\parallel (FRST^a_f)(\cdot,\xi)\parallel _{BMO} ~\leq ~ \left|A_{\theta}\right|\left(\left\|f\right\|_{BMO}+2m\right).
\end{equation*}
\end{thm}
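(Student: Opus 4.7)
The plan is to exploit the product-and-convolution structure of the fractional S-transform. For any fixed $\xi \in \mathbb{R}_0$, setting $F_\xi(t) := f(t)\,K_a(t,\xi)$, one can rewrite
\[
FRST^a_f(\tau,\xi) = \int_{\mathbb{R}} F_\xi(t)\, g(\tau-t, \xi)\, dt = \bigl(F_\xi \ast g(\cdot,\xi)\bigr)(\tau),
\]
so the transform factors as a pointwise multiplication by $K_a(\cdot,\xi)$ followed by a convolution with the window $g(\cdot,\xi)$. The strategy is to handle these two steps separately, using the preceding lemma for the first and a standard translation-invariance argument for the second.

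The preceding lemma already controls the multiplicative step, giving $\|F_\xi\|_{BMO} \leq |A_\theta|\bigl(\|f\|_{BMO} + 2m\bigr)$. It therefore suffices to show that convolution by $g(\cdot,\xi)$ is bounded on $BMO(\mathbb{R})$ with operator norm at most $\|g(\cdot,\xi)\|_{L^1}$. For that I would argue directly: given any bounded interval $I \subset \mathbb{R}$, Fubini yields $\bigl(F_\xi \ast g(\cdot,\xi)\bigr)_I = \int_{\mathbb{R}} g(y,\xi)\, (F_\xi)_{I-y}\, dy$. Subtracting this identity inside the BMO average over $I$ and applying the triangle inequality reduces the estimate to a weighted integral of the mean oscillations of $F_\xi$ on the translated intervals $I-y$. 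Since each such oscillation is bounded by $\|F_\xi\|_{BMO}$ by translation invariance of the BMO seminorm, one concludes $\|F_\xi \ast g(\cdot,\xi)\|_{BMO} \leq \|g(\cdot,\xi)\|_{L^1}\, \|F_\xi\|_{BMO}$.

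Finally, the window $g(\cdot,\xi)$ defined in (\ref{eq:3.1}) is a positive Gaussian, so by the normalization condition (\ref{eq:3.2}) one has $\|g(\cdot,\xi)\|_{L^1} = \int_{\mathbb{R}} g(t,\xi)\,dt = 1$. Chaining the three estimates produces the claimed inequality $\|FRST^a_f(\cdot,\xi)\|_{BMO} \leq |A_\theta|\bigl(\|f\|_{BMO} + 2m\bigr)$, and continuity of the linear operator $f \mapsto FRST^a_f(\cdot,\xi)$ from $BMO(\mathbb{R})$ to $BMO(\mathbb{R})$ follows from this quantitative bound. The only real obstacle is the convolution-BMO estimate; since the paper favors explicit computations, I would write out the Fubini-plus-triangle-inequality argument in full rather than invoke it as a standard fact.
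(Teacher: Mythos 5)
Your proposal is correct and follows essentially the same route as the paper: the paper likewise writes $FRST^a_f(\cdot,\xi)$ as the convolution of $f(\cdot)K_a(\cdot,\xi)$ with $g(\cdot,\xi)$, pushes the mean-oscillation average through the convolution via Fubini and the triangle inequality, identifies the oscillation over the translated intervals $I-x$ with $\|f(\cdot)K_a(\cdot,\xi)\|_{BMO}$ by translation invariance, and finishes with $\int_{\mathbb{R}}|g(x,\xi)|\,dx=1$ and the preceding lemma. The only difference is presentational: you isolate the convolution-on-$BMO$ bound as a separate modular step, whereas the paper carries out the same computation inline.
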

\begin{proof} Since
\begin{equation*}
\begin{split}
&\left|(FRST^a_f)(\tau,\xi)-(FRST^a_f)_{I}(\tau,\xi)\right|\\
&=\bigg|\int_{\mathbb{R}} f(\tau-x) g(x,\xi) K_a(\tau-x,\xi) dx\\
&~ - \frac{1}{|I|}\int_{I}\int_{\mathbb{R}} f(\alpha-x) g(x,\xi) K_a(\alpha-x,\xi) dx \,d\alpha\bigg| \\
&=\bigg|\int_{\mathbb{R}} f(\tau-x) g(x,\xi) K_a(\tau-x,\xi) dx\\
&~-\int_{\mathbb{R}} g(x,\xi)\left(\frac{1}{|I|}\int_{I} f(\alpha-x) K_a(\alpha-x,\xi)  d\alpha\right) dx \bigg| \\
&=\left|\int_{\mathbb{R}} g(x,\xi)\bigg(f(\tau-x)  K_a(\tau-x,\xi)-\frac{1}{|I|}\int_{I} f(\alpha-x) K_a(\alpha-x,\xi)  d\alpha\bigg)dx\right|\\
& \leq \int_{\mathbb{R}} |g(x,\xi)| \left|f(\tau-x)  K_a(\tau-x,\xi)-\frac{1}{|I|}\int_{I} f(\alpha-x) K_a(\alpha-x,\xi)  d\alpha\right|dx.\\
\end{split}
\end{equation*}
Therefore,
\begin{equation*}
\begin{split}
&\parallel (FRST^a_f)(\cdot,\xi)\parallel_{BMO}\\
&=\sup_{I\subset{\mathbb{R}}}\frac{1}{|I|}\int_{I}\left|(FRST^a_f)(\tau,\xi)-(FRST^a_f)_{I}(\tau,\xi)\right|d\tau \\
&\leq \sup_{I\subset{\mathbb{R}}}\frac{1}{|I|} \int_{I} \int_{\mathbb{R}}|g(x,\xi)| \bigg|f(\tau-x)  K_a(\tau-x,\xi)\\
&~-\frac{1}{|I|}\int_{I} f(\alpha-x) K_a(\alpha-x,\xi)  d\alpha\bigg|dx \, d\tau\\
&= \int_{\mathbb{R}}|g(x,\xi)|  \bigg( \sup_{I\subset{\mathbb{R}}}\frac{1}{|I|} \int_{I} \bigg|f(\tau-x)  K_a(\tau-x,\xi)\\
&~-\frac{1}{|I|}\int_{I} f(\alpha-x) K_a(\alpha-x,\xi)  d\alpha\bigg| d\tau\bigg) dx\\
&= \int_{\mathbb{R}}|g(x,\xi)|  \left( \sup_{J \subset{\mathbb{R}}}\frac{1}{|J|} \int_{J} \left|f(t)  K_a(t,\xi)-\frac{1}{|J|}\int_{J} f(y) K_a(y,\xi)  dy\right| dt \right) dx\\
& \hspace{1cm} (\text{here}\, J=I-x\, \text{for} ~ x \in \mathbb{R})\\
&= \left\|f(\cdot) K_a(\cdot, \xi)\right\|_{BMO} \int_{\mathbb{R}}|g(x,\xi)| dx\\
&= \left\|f(\cdot) K_a(\cdot, \xi)\right\|_{BMO}. \\
\end{split}
\end{equation*}
Now by using above lemma we get
\begin{eqnarray*}
\parallel (FRST^a_f)(\cdot,\xi)\parallel_{BMO}
&\leq &\left|A_{\theta}\right|\left(\left\|f\right\|_{BMO}+2m\right).
\end{eqnarray*}
\end{proof}
\begin{thm}
\label{thm 5.1}
Let $f \in L^1({\mathbb{R}})$ such that
\begin{equation}
\label{eq:2.4}
\sup_{t>0}\left|\int_{\mathbb{R}} f(x-y) \phi_t(y)dy\right|= \sup_{t>0}\int_{\mathbb{R}}\left|f(x-y) \phi_t(y)\right|dy < \infty.
\end{equation}
Then for any fixed $\xi\in \mathbb{R}_0 $, the operator $FRST^a_f: H^1({\mathbb{R}}) \to H^1({\mathbb{R}})$ is continuous and
\begin{equation*}
\parallel (FRST)(\cdot,\xi)\parallel _{H^1} \leq \left|A_{\theta}\right| \parallel f\parallel_{H^1}.
\end{equation*}
\end{thm}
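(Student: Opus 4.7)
The plan is to realise $(FRST^a_f)(\cdot,\xi)$, for fixed $\xi \in \mathbb{R}_0$, as a convolution operator in the first variable and then exploit the translation invariance built into the $H^1$ norm. I would begin by changing variables $u = \tau - t$ in the definition of $FRST^a_f$ to rewrite it as
\begin{equation*}
(FRST^a_f)(\tau,\xi) = \int_{\mathbb{R}} g(u,\xi)\, h_\xi(\tau - u)\, du, \qquad h_\xi(w) := f(w)\, K_a(w,\xi),
\end{equation*}
i.e.\ $(FRST^a_f)(\cdot,\xi) = g(\cdot,\xi) \ast h_\xi$.

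Next I would convolve with $\phi_t$ and apply Fubini's theorem to pull the $g$-integral outside, obtaining
\begin{equation*}
\bigl((FRST^a_f)(\cdot,\xi) \ast \phi_t\bigr)(x) = \int_{\mathbb{R}} g(u,\xi)\, (h_\xi \ast \phi_t)(x - u)\, du.
\end{equation*}
Passing to absolute values, bringing $\sup_{t>0}$ inside the $u$-integral as an upper bound, integrating in $x$, and using translation invariance in $x$ to decouple the $u$-dependence, I would arrive at
\begin{equation*}
\|(FRST^a_f)(\cdot,\xi)\|_{H^1} \leq \|h_\xi\|_{H^1} \int_{\mathbb{R}} |g(u,\xi)|\, du = \|h_\xi\|_{H^1},
\end{equation*}
where the final equality uses the positivity of $g$ from \eqref{eq:3.1} together with the normalisation \eqref{eq:3.2}.

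To finish I would bound $\|h_\xi\|_{H^1} \leq |A_\theta|\,\|f\|_{H^1}$. The pointwise estimate $|K_a(w,\xi)| \leq |A_\theta|$ gives
\begin{equation*}
|(h_\xi \ast \phi_t)(x)| \leq |A_\theta| \int_{\mathbb{R}} |f(x - y)|\, |\phi_t(y)|\, dy,
\end{equation*}
and the hypothesis \eqref{eq:2.4} identifies the supremum over $t>0$ of the right-hand side with $|A_\theta|\, \sup_{t>0} |(f \ast \phi_t)(x)|$. Integrating in $x$ then yields the claim, and continuity of the operator on $H^1$ follows at once from this linear bound.

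The main obstacle will be the bookkeeping in the double integral, namely justifying Fubini on the integrand $|g(u,\xi)|\,|f(x - y - u)|\,|K_a(x - y - u,\xi)|\,|\phi_t(y)|$ and transporting $\sup_{t>0}$ past the outer $u$-integration. Both steps are routine since $g(\cdot,\xi) \in L^1(\mathbb{R})$, and it is precisely the hypothesis \eqref{eq:2.4} that eliminates the cancellation issue which would otherwise prevent the clean pointwise bound on $|h_\xi \ast \phi_t|$ used above.
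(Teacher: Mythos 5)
Your proposal is correct and follows essentially the same route as the paper: write $(FRST^a_f)(\cdot,\xi)$ as the convolution of $g(\cdot,\xi)$ with $f(\cdot)K_a(\cdot,\xi)$, push $\sup_{t>0}$ and the $\tau$-integral past the $g$-integral, use $|K_a(\cdot,\xi)|\leq|A_\theta|$ together with hypothesis \eqref{eq:2.4} to recover $\sup_{t>0}|(f\ast\phi_t)|$, and finish with $\int_{\mathbb{R}}|g(x,\xi)|\,dx=1$. The only cosmetic difference is that you factor the estimate through the intermediate bound $\|f(\cdot)K_a(\cdot,\xi)\|_{H^1}\leq|A_\theta|\|f\|_{H^1}$, whereas the paper carries out the same inequalities in a single chain.
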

\begin{proof}
We have
\begin{equation*}
\begin{split}
&\parallel FRST^a_f(\cdot,\xi)\parallel _{H^1}\\
&=\int_{\mathbb{R}} \sup_{t>0} \left|\left(FRST^a_f(\cdot,\xi)*\phi_t(\cdot) \right)(\tau)\right|d\tau\\
&=\int_{\mathbb{R}} \sup_{t>0} \left|\left(\left(\int_{\mathbb{R}} f(\cdot-x) g(x,\xi) K_a(\cdot-x,\xi) dx \right)*\phi_t(\cdot) \right)(\tau)\right|d\tau\\
&=\int_{\mathbb{R}} \sup_{t>0}\left|\int_{\mathbb{R}}g(x,\xi)\left(\int_{\mathbb{R}}f(\tau-x-y)  K_a(\tau-x-y,\xi)\phi_t(y)dy\right)dx\right|d\tau\\
&\leq  \int_{\mathbb{R}}\left|g(x,\xi)\right|\left(\int_{\mathbb{R}}\sup_{t>0}\int_{\mathbb{R}}\left|f(\tau-x-y) K_a(\tau-x-y,\xi) \phi_t(y)\right|dy d\tau\right)dx\\
&=  \int_{\mathbb{R}}\left|g(x,\xi)\right|\left(\int_{\mathbb{R}}\sup_{t>0}\int_{\mathbb{R}}\left|f(z) K_a(z,\xi) \phi_t(\tau-x-z)\right|dz d\tau\right)dx\\
&\leq  \left|A_{\theta}\right|\int_{\mathbb{R}}\left|g(x,\xi)\right|\left(\int_{\mathbb{R}}\sup_{t>0}\int_{\mathbb{R}}\left|f(z) \phi_t(\tau-x-z)\right|dz d\tau\right)dx\\
&=\left|A_{\theta}\right|\int_{\mathbb{R}}\left|g(x,\xi)\right|\left(\int_{\mathbb{R}}\sup_{t>0}\left|\left(f*\phi_t\right)(\tau-x)\right|d\tau\right)dx\\
& \hspace{1 cm}(\text{by using equation} ~(\ref{eq:2.4})~~ \text{and then definition of convolution})\\
&=\left|A_{\theta}\right|\left\|f\right\|_{H^1}\int_{\mathbb{R}}\left|g(x,\xi)\right|dx\\
&=\left|A_{\theta}\right|\left\|f\right\|_{H^1}.
\end{split}
\end{equation*}
\end{proof}
\section{The fractional S-transform on weighted function spaces}
Let us recall the definitions of relevant weighted function spaces.
\begin{defn}
A positive function $\kappa$ defined on ${\mathbb{R}}$ is called a tempered weight function if there exist positive constants $C$ and $N $ such that 
\begin{equation}
\kappa(\xi+\eta)\leq (1+C\vert \xi \vert)^N \kappa (\eta) ~~  \text{for all } ~ \xi,\eta\in\mathbb{R},
\end{equation}
and the set of all such functions $\kappa$ is denoted by $\mathscr{K}$. Certain examples and basic properties of weight function $\kappa$ can be found in \cite{Hormander83}.
\end{defn}
\begin{defn}
 For $1 \leq p < \infty$, the weighted Lebesgue space $L^{p}_{\kappa}(\mathbb{R})$ is defined as the space of all measurable functions $f$ on $\mathbb{R} $ such that 
\begin{eqnarray*}
\parallel f \parallel_{L_{\kappa}^p}=\left(\int_{\mathbb{R}} \vert f(x) \vert ^{p} \,\kappa(x) dx \right)^{1/p} < \infty .
\end{eqnarray*}
\end{defn}
\begin{defn}
The weighted bounded mean oscillation space BMO$_{\kappa} (\mathbb{R})$ is defined as the space of all weighted 
Lebesgue integrable (locally) functions defined on  $\mathbb{R}$ such that 
\begin{equation*}
\parallel f\parallel _{{BMO}_\kappa}= \sup_{I\subset{\mathbb{R}}} {\frac{1}{|I|_\kappa}}\int _I{|f(x)-f_{I}|\,\kappa(x)dx} < \infty,
\end{equation*}
where the supremum is taken over all intervals $I$ in $\mathbb{R}$ and \ $|I|_\kappa=\int_{I}\kappa(x)dx $.
\end{defn}
\begin{defn}
The weighted Hardy space is defined as the space of all functions $ f \in L_\kappa ^1 (\mathbb{R})$ such that
\begin{equation*}
\parallel f\parallel _{H_\kappa^1}=\int_{\mathbb{R}} \sup_{t>0} \left|\left(f \ast\phi_t\right)(x)\right|\kappa(x)dx < \infty.
\end{equation*}
where $\phi$ is any test function with $\int _{\mathbb{R}} \phi(x) dx \neq 0$ and $\phi_t(x)=t^{-1}\phi(x/t)$;  $t>0$, $x\in {\mathbb{R}}$.
\end{defn}
Our main results on weighted function spaces  are as follows.
\begin{lemma}
\label{lemma 3.1}
If $g(x,\xi)$ is defined by (\ref{eq:3.1}) and $\theta \neq j\pi$, then for any fixed $\xi \in \mathbb{R}_0$ we have the following estimate 
\begin{equation*}
\int_{\mathbb{R}} |g(x,\xi)| (1+C|x|)^N dx \leq A_{\xi,N} ,
\end{equation*}
where $N$ is positive constant, and $A_{\xi,N}$ is a constant depend on $\xi$ and $N$.
\end{lemma}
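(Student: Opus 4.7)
The window $g(\cdot,\xi)$ is nonnegative, so $|g(x,\xi)|=g(x,\xi)$, and up to a $\xi$-dependent scaling it is a centered Gaussian density. Setting $\sigma=\sigma(\xi):=k/|\xi\csc\theta|^{p}$, the definition (\ref{eq:3.1}) becomes
\begin{equation*}
g(x,\xi)=\frac{1}{\sigma\sqrt{2\pi}}\,e^{-x^{2}/(2\sigma^{2})}.
\end{equation*}
Under the hypotheses $\theta\neq j\pi$ and $\xi\in\mathbb{R}_{0}$, the quantity $|\xi\csc\theta|^{p}$ is a finite positive number, so $\sigma$ is a well-defined finite positive scale, and every absolute moment of this density is finite.

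The strategy is then just to expand the polynomial factor by the binomial theorem and reduce to Gaussian moments. First I would write
\begin{equation*}
(1+C|x|)^{N}=\sum_{j=0}^{N}\binom{N}{j}C^{j}|x|^{j},
\end{equation*}
so that
\begin{equation*}
\int_{\mathbb{R}}|g(x,\xi)|(1+C|x|)^{N}dx=\sum_{j=0}^{N}\binom{N}{j}C^{j}\int_{\mathbb{R}}|x|^{j}g(x,\xi)\,dx.
\end{equation*}
Next I would evaluate the $j$-th absolute moment by the substitution $x=\sigma u$, giving
\begin{equation*}
\int_{\mathbb{R}}|x|^{j}g(x,\xi)\,dx=\sigma^{j}\cdot\frac{1}{\sqrt{2\pi}}\int_{\mathbb{R}}|u|^{j}e^{-u^{2}/2}du=\sigma^{j}\cdot\frac{2^{j/2}}{\sqrt{\pi}}\,\Gamma\!\left(\frac{j+1}{2}\right).
\end{equation*}
Summing the finitely many terms produces a finite constant
\begin{equation*}
A_{\xi,N}:=\sum_{j=0}^{N}\binom{N}{j}C^{j}\sigma(\xi)^{j}\,\frac{2^{j/2}}{\sqrt{\pi}}\,\Gamma\!\left(\frac{j+1}{2}\right),
\end{equation*}
which depends only on $\xi$, $N$, and the fixed parameters $C,k,p,\theta$, yielding the desired estimate.

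There is really no serious obstacle here. The only things to verify carefully are that $\sigma(\xi)$ stays finite and positive (which is exactly what the assumptions $\theta\neq j\pi$ and $\xi\in\mathbb{R}_{0}$ guarantee) and that the sum is finite (which is automatic, since $N$ is a fixed positive integer and all Gaussian moments are finite). If a more qualitative bound is preferred over the explicit formula, one can alternatively observe $(1+C|x|)^{N}\le 2^{N}(1+C^{N}|x|^{N})$ and estimate by the $0$-th and $N$-th moments only, but the binomial expansion gives the sharpest constant with essentially the same work.
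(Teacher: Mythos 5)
Your proposal is essentially the paper's own argument: both reduce the integral to absolute moments of the Gaussian window $g(\cdot,\xi)$, which are finite and expressible via the Gamma function once one notes that $|\xi\csc\theta|^{p}$ is a finite positive number for $\xi\in\mathbb{R}_{0}$ and $\theta\neq j\pi$. The paper simply uses the cruder two-term bound $(1+C|x|)^{N}\le 2^{[N]}\bigl(1+|Cx|^{[N]+1}\bigr)$ and one Gaussian moment, where you expand the full binomial sum. The one point you should fix is that $N$ here is only a positive \emph{constant} (it is the exponent from the tempered-weight condition), not necessarily an integer, so the binomial theorem with exponent $N$ is not directly available; this is why the paper passes to the integer part $[N]$ first. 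The repair is immediate and is in fact contained in your own closing remark: either bound $(1+C|x|)^{N}\le(1+C|x|)^{[N]+1}$ and then expand, or use $(1+C|x|)^{N}\le 2^{N}\bigl(1+(C|x|)^{N}\bigr)$, which holds for every real $N>0$, together with the finiteness of the $N$-th absolute Gaussian moment for every real $N\ge 0$. With that adjustment your proof is complete and equivalent to the paper's.
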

\begin{proof} Using the well-known inequality,
$|x+y|^n \leq 2^{n-1}(|x|^n + |y|^n), ~ \forall \, x, y \in \mathbb{R}, ~ \forall \, n \in \mathbb{N}$, we have
\begin{eqnarray*}
&&\int_{\mathbb{R}} |g(x,\xi)| (1+C|x|)^N dx\\
&& \leq   \int_{\mathbb{R}} |g(x,\xi)| \, 2^{[N]}\left(1+|Cx|^{[N]+1}\right) dx\\
&& =   2^{[N]} \left( 1+  C^{[N]+1}\int_{\mathbb{R}} |g(x,\xi)| |x|^{[N]+1}dx\right)\\
&& =  2^{[N]} \left( 1+ 2 \, C^{[N]+1}\int_0 ^\infty x^{[N]+1} g(x,\xi) dx\right)\\
&& =  2^{[N]} \left( 1+ 2 \, C^{[N]+1} \frac{2^{([N]+ 1) /2} k^{[N]+1}}{2\sqrt{\pi} \, |\xi \csc \theta|^{p([N]+1)}} \Gamma ([N]/2+1)\right)\\
&& \leq  A_{\xi,N}. 
\end{eqnarray*}
\end{proof}
\begin{lemma}
\label{lemma 3.2}
If $K_a(x,\xi)$ is the transform kernel defined in (\ref{ffk}) and $\theta \neq j\pi$, then for any locally Lebesgue integrable function defined on $\mathbb{R}$ we have
\begin{equation*}
\left\|f(\cdot) K_a(\cdot, \xi)\right\|_{BMO_{\kappa}}\leq |A_\theta| \left(\left\|f\right\|_{BMO_{\kappa}}+2m\right)
\end{equation*}
where $m$ is a constant given in equation (\ref{eq:Int-4.2}).
\end{lemma}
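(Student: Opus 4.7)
The plan is to follow the proof of the preceding (unweighted) lemma almost verbatim, with the single modification that the outer normalizing constant $|I|$ is replaced by $|I|_\kappa=\int_I \kappa(x)\,dx$ and the outer Lebesgue measure $dx$ is replaced by $\kappa(x)\,dx$. The only analytic input concerning the kernel remains the pointwise bound $|K_a(x,\xi)|\le |A_\theta|$ from the definition \eqref{ffk}, and the only analytic input concerning $f$ remains the uniform bound $|f_I|\le m$ from \eqref{eq:Int-4.2}. The algebraic device of inserting and subtracting $\frac{K_a(x,\xi)}{|I|}\int_I f(t)\,dt$ is identical.

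First, I would write
\[
\|f(\cdot)K_a(\cdot,\xi)\|_{BMO_\kappa}
=\sup_{I\subset \mathbb{R}} \frac{1}{|I|_\kappa}\int_I \left|f(x)K_a(x,\xi)-\frac{1}{|I|}\int_I f(t)K_a(t,\xi)\,dt\right|\kappa(x)\,dx,
\]
then insert $\pm \frac{K_a(x,\xi)}{|I|}\int_I f(t)\,dt$ inside the absolute value and split by the triangle inequality into three pieces, exactly as in the proof of the previous lemma.

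Next, I would estimate the three pieces separately. The first piece, after factoring out $|K_a(x,\xi)|\le |A_\theta|$, becomes $|A_\theta|\,\|f\|_{BMO_\kappa}$ directly from the definition of the weighted $BMO_\kappa$ norm. The second piece uses $|f_I|\le m$ from \eqref{eq:Int-4.2}, pulling the constant $m$ outside the weighted average; the third piece similarly uses $\frac{1}{|I|}\int_I |f(t)|\,dt \le m$. In both cases the trivial identity $\frac{1}{|I|_\kappa}\int_I \kappa(x)\,dx = 1$ lets the constants pass through cleanly, so each of these two pieces contributes at most $|A_\theta|\,m$. Summing yields the stated bound $|A_\theta|(\|f\|_{BMO_\kappa}+2m)$.

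I do not expect any real obstacle here. The weight $\kappa$ plays no role beyond renormalizing the outer average; because the quantities being estimated in the second and third terms are constants in $x$, the switch from $\frac{1}{|I|}\int_I \cdot\,dx$ to $\frac{1}{|I|_\kappa}\int_I \cdot\,\kappa(x)\,dx$ is transparent. The only small point to double-check is that the inner averages appearing inside the triangle-inequality split are still the \emph{unweighted} ones $\frac{1}{|I|}\int_I f(t)\,dt$ and $\frac{1}{|I|}\int_I f(t)K_a(t,\xi)\,dt$, matching the convention used in \eqref{eq:Int-4.2}; this is consistent with how $f_I$ is written in the definition of $\|\cdot\|_{BMO_\kappa}$.
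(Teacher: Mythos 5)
Your proposal matches the paper's own proof essentially line for line: the same insertion and subtraction of $\frac{K_a(x,\xi)}{|I|}\int_I f(t)\,dt$, the same three-term split, the same use of $|K_a(x,\xi)|\le|A_\theta|$ and of the bound $|f_I|\le m$ from \eqref{eq:Int-4.2}, with the weight $\kappa$ entering only through the outer normalized average. Your observation that the inner averages stay unweighted, consistent with how $f_I$ appears in the definition of $\|\cdot\|_{BMO_\kappa}$, is exactly the convention the paper uses, so the argument is correct as written.
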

\begin{proof}
We have
\begin{equation*}
\begin{split}
&\left\|f(\cdot)K_a(\cdot, \xi)\right\|_{BMO_{\kappa}}\\
&=\sup_{I\subset{\mathbb{R}}}\frac{1}{|I|_{\kappa}}\int_{I} \bigg| f(x) K_a(x, \xi)-\frac{1}{|I|}\int_{I} f(t) K_a(t, \xi) dt\bigg|{\kappa}(x)dx\\
&=\sup_{I\subset{\mathbb{R}}}\frac{1}{|I|_{\kappa}}\int_{I} \bigg| f(x) K_a(x, \xi)-\frac{K_a(x, \xi)}{|I|}\int_I f(t) dt
 +\frac{K_a(x, \xi)}{|I|}\int_I f(t) dt \\ 
& ~~ -\frac{1}{|I|}\int_{I} f(t) K_a(t, \xi)  dt\bigg| \kappa(x)dx\\
&=\sup_{I\subset{\mathbb{R}}}\frac{1}{|I|_{\kappa}}\int_{I} \bigg| K_a(x, \xi)\left(f(x)-\frac{1}{|I|}\int_I f(t) dt\right)
+\frac{K_a(x, \xi)}{|I|}\int_I f(t) dt\\
& ~~ -\frac{1}{|I|}\int_{I} f(t) K_a(t, \xi) dt\bigg| \kappa(x) dx\\
&\leq \sup_{I\subset{\mathbb{R}}}\frac{1}{|I|_{\kappa}}\int_{I}\left| K_a(x, \xi)\left(f(x)-\frac{1}{|I|}\int_I f(t) dt\right)\right|\kappa(x)dx\\
&~+\sup_{I\subset{\mathbb{R}}}\frac{1}{|I|_{\kappa}}\int_{I}\left|\frac{K_a(x, \xi)}{|I|}\int_I f(t) dt\right| \kappa(x)dx\\
&~+\sup_{I\subset{\mathbb{R}}}\frac{1}{|I|_{\kappa}}\int_{I}\left|\frac{1}{|I|}\int_{I} f(t) K_a(t, \xi) dt\right|\kappa(x) \, dx\\
&\leq  |A_\theta|\left\|f\right\|_{BMO_{\kappa}} + |A_\theta| \sup_{I\subset{\mathbb{R}}}\frac{1}{|I|_{\kappa}}\int_{I}\left|f\right|_I{\kappa}(x) dx\\
&~ + |A_\theta| \sup_{I\subset{\mathbb{R}}}\frac{1}{|I|_{\kappa}}\int_{I} \left(\frac{1}{|I|}\int_{I} |f(t)| dt\right) \kappa(x)dx\\
&\leq  |A_\theta|\left(\left\|f\right\|_{BMO_{\kappa}} + \frac{1}{|I|_{\kappa}}m|I|_{\kappa} + \sup_{I\subset{\mathbb{R}}}\frac{1}{|I|_{\kappa}}\int_{I} m \,\kappa(x) \, dx \right)\\
&\leq |A_\theta|\left( \left\|f\right\|_{BMO_{\kappa}}+\frac{1}{|I|_{\kappa}}m|I|_{\kappa}+\frac{1}{|I|_{\kappa}}m|I|_{\kappa}\right)\\
&=  |A_\theta|\left(\left\|f\right\|_{BMO_{\kappa}} + 2m\right).
\end{split}
\end{equation*}
\end{proof}
\begin{thm}
\label{th-6.1}
For any fixed $\xi\in \mathbb{R}_0 $,
 the operator $FRST^a_f: BMO_\kappa(\mathbb{R}) \to BMO_\kappa(\mathbb{R})$ is continuous. Furthermore, we have 
\begin{equation*}
\parallel (FRST^a_f)(\cdot,\xi)\parallel _{{BMO}_\kappa} \leq A_{\xi, N} \left|A_{\theta}\right| \left(\left\|f\right\|_{BMO_\kappa}+2m\right)
\end{equation*}
where m is a constant given in equation (\ref{eq:Int-4.2}).
\end{thm}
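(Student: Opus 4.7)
The plan is to run the same proof as Theorem \ref{thm-4.1} while tracking the weight $\kappa$ carefully, and then use Lemma \ref{lemma 3.1} to absorb the translation-mismatch factor produced by $\kappa$.

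First, I would start from the pointwise estimate already established inside the proof of Theorem \ref{thm-4.1}: moving the inner mean inside the outer integral and using that $\int_{\mathbb R} g(x,\xi)\,dx = 1$, one obtains
\begin{equation*}
\bigl|(FRST^a_f)(\tau,\xi)-(FRST^a_f)_{I}(\tau,\xi)\bigr|
\le \int_{\mathbb R} |g(x,\xi)| \, \Bigl| f(\tau-x) K_a(\tau-x,\xi) - \tfrac{1}{|I|}\!\int_I f(\alpha-x) K_a(\alpha-x,\xi)\,d\alpha \Bigr| dx.
\end{equation*}
Next I would multiply by $\kappa(\tau)$, integrate $\tau$ over $I$, divide by $|I|_\kappa$, and use Fubini to bring the $x$-integral outside, arriving at
\begin{equation*}
\frac{1}{|I|_\kappa}\int_I \bigl|\cdots\bigr|\kappa(\tau)\,d\tau
\le \int_{\mathbb R} |g(x,\xi)| \cdot \frac{1}{|I|_\kappa}\int_I \Bigl| f(\tau-x)K_a(\tau-x,\xi) - \tfrac{1}{|I|}\!\int_I \!\cdots d\alpha\Bigr|\kappa(\tau)\,d\tau\, dx.
\end{equation*}

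The crucial step is the translation $\tau = t + x$ (equivalently $J = I - x$) in the inner integral, which in the unweighted case of Theorem \ref{thm-4.1} turned the inner quantity into $\|f(\cdot)K_a(\cdot,\xi)\|_{BMO}$. Here the weight does not translate, so I would invoke the tempered-weight bound in both directions: $\kappa(t+x)\le (1+C|x|)^{N}\kappa(t)$ and the symmetric inequality $\kappa(t)\le (1+C|x|)^{N}\kappa(t+x)$, the latter giving $|I|_\kappa\ge (1+C|x|)^{-N}|J|_\kappa$, hence $1/|I|_\kappa\le (1+C|x|)^{N}/|J|_\kappa$. Combining these two estimates (and using that $|I|=|J|$ so the inner unweighted average translates cleanly) produces
\begin{equation*}
\frac{1}{|I|_\kappa}\int_I \bigl|\cdots\bigr|\kappa(\tau)\,d\tau
\le (1+C|x|)^{2N}\, \frac{1}{|J|_\kappa}\int_J \Bigl|f(t)K_a(t,\xi) - \tfrac{1}{|J|}\!\int_J f(y)K_a(y,\xi)\,dy\Bigr|\kappa(t)\,dt.
\end{equation*}
Taking the supremum over $J\subset\mathbb R$ bounds this by $(1+C|x|)^{2N}\,\|f(\cdot)K_a(\cdot,\xi)\|_{BMO_\kappa}$, which is independent of $I$, so the supremum over $I$ may then be taken on the left.

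To finish, I would integrate in $x$ against $|g(x,\xi)|$ and apply Lemma \ref{lemma 3.1} (with $2N$ in place of $N$, absorbed into the constant $A_{\xi,N}$) to get
\begin{equation*}
\|(FRST^a_f)(\cdot,\xi)\|_{BMO_\kappa}\le A_{\xi,N}\,\|f(\cdot)K_a(\cdot,\xi)\|_{BMO_\kappa},
\end{equation*}
after which Lemma \ref{lemma 3.2} immediately yields the claimed bound. The main obstacle is precisely the third step: the weight $\kappa$ is not translation-invariant, so the change of variables $J=I-x$ that was essentially free in Theorem \ref{thm-4.1} now costs a factor $(1+C|x|)^{2N}$, and the whole point of introducing Lemma \ref{lemma 3.1} is to show that this factor is integrable against the window $g(\cdot,\xi)$; everything else is a routine replay of the unweighted argument.
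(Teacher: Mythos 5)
Your proposal is correct and follows essentially the same route as the paper: decompose the difference under the $x$-integral, pull the $x$-integral outside, translate $J=I-x$, control the weight mismatch by the tempered-weight inequality, and finish with Lemmas \ref{lemma 3.1} and \ref{lemma 3.2}. In fact your treatment of the translation step is more careful than the paper's, which silently replaces $|I|_\kappa$ by $|J|_\kappa$ and uses the tempered bound only on $\kappa(x+y)$, obtaining the factor $(1+C|x|)^{N}$; your two-sided use of the weight condition (giving $(1+C|x|)^{2N}$ and hence the constant $A_{\xi,2N}$) is the rigorous version of the same argument and costs nothing since the constant is unspecified anyway.
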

\begin{proof}
\begin{equation*}
\begin{split}
&\parallel (FRST^a_f)(\cdot,\xi)\parallel_{{BMO}_\kappa}\\
&=\sup_{I\subset{\mathbb{R}}}\frac{1}{|I|_\kappa}\int_{I}\left|(FRST^a_f)(\tau,\xi)-(FRST^a_f)_{I}(\tau,\xi)\right| \kappa(\tau)d\tau \\
&=\sup_{I\subset{\mathbb{R}}}\frac{1}{|I|_\kappa}\int_{I}\bigg|\int_{\mathbb{R}} g(x,\xi)\bigg(f(\tau-x)  K_a(\tau-x,\xi)\\
&~-\frac{1}{|I|}\int_{I} f(\alpha-x) K_a(\alpha-x,\xi)  d\alpha\bigg)dx\bigg|\kappa(\tau)d\tau\\
&\leq  \int_{\mathbb{R}} |g(x,\xi)| \bigg(\sup_{I\subset{\mathbb{R}}}\frac{1}{|I|_\kappa}\int_{I} \bigg|f(\tau-x) K_a(\tau-x,\xi)\\
&~-\frac{1}{|I|_\kappa}\int_{I} f(\alpha-x) K_a(\alpha-x,\xi)  d\alpha\bigg|k(\tau)d\tau\bigg)dx\\
&= \int_{\mathbb{R}} |g(x,\xi)| \bigg(\sup_{J\subset{\mathbb{R}}}\frac{1}{|J|_\kappa}\int_{J} \bigg|f(y) K_a(y,\xi)
-\frac{1}{|J|_\kappa}\int_{J} f(t)  K_a(t,\xi) dt\bigg|\kappa(x+y)dy\bigg)dx\\
& \hspace{1cm} (\text{here}\, J=I-x\, \text{for}\, x\in \mathbb{R})\\
&\leq \int_{\mathbb{R}} |g(x,\xi)| \bigg(\sup_{J\subset{\mathbb{R}}}\frac{1}{|J|_\kappa} \int_{J} \bigg|f(y) K_a(y,\xi)\\
&~ -\frac{1}{|J|_\kappa}\int_{J} f(t)  K_a(t,\xi) dt\bigg|(1+C|x|)^N \kappa(y)dy\bigg)dx\\
&=  \left\|f(\cdot) K_a(\cdot,\xi)\right\|_{BMO_\kappa} \int_{\mathbb{R}} |g(x,\xi)| (1+C|x|)^N dx.
\end{split}
\end{equation*}
So by using lemma \ref{lemma 3.1}  and \ref{lemma 3.2}, we get
\begin{eqnarray*}
\parallel (FRST^a_f)(\cdot,\xi)\parallel_{BMO_\kappa} \leq A_{\xi, N} \left|A_{\theta}\right| \left(\left\|f\right\|_{BMO_\kappa}+2m\right).
\end{eqnarray*}
\end{proof}

\begin{thm}
\label{thm:6.1}
Let $f \in L^1({\mathbb{R}})$  and satisfies the condition (\ref{eq:2.4}). Then, for any fixed $\xi\in \mathbb{R}_0 $, the operator $FRST^a_f : H_\kappa^1({\mathbb{R}}) \to H_\kappa^1({\mathbb{R}})$ is continuous. Furthermore, we have
\begin{equation*}
\parallel FRST^a_f(\cdot,\xi)\parallel _{H_\kappa ^1} \leq A_{\xi,N} \left|A_{\theta}\right| \left\|f\right\|_{H^1_{\kappa}}.
\end{equation*}
\end{thm}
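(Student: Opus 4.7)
The plan is to mirror the proof of Theorem~\ref{thm 5.1}, but to carry the weight $\kappa(\tau)$ along throughout and only release it at the very last moment via the tempered-weight inequality, with the extra factor $(1+C|x|)^N$ absorbed by Lemma~\ref{lemma 3.1}.

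First I would start from the definition
\begin{equation*}
\|FRST^a_f(\cdot,\xi)\|_{H^1_\kappa}=\int_{\mathbb{R}}\sup_{t>0}\bigl|(FRST^a_f(\cdot,\xi)\ast\phi_t)(\tau)\bigr|\,\kappa(\tau)\,d\tau,
\end{equation*}
expand the inner convolution exactly as in Theorem~\ref{thm 5.1}, and push the supremum and the absolute value inside the $x$- and $y$-integrals (this is where hypothesis~(\ref{eq:2.4}) is used to justify interchanging $\sup_{t>0}$ with the $y$-integral). Applying the bound $|K_a(\cdot,\xi)|\le|A_\theta|$ and using Fubini to move the $x$-integral outside then yields
\begin{equation*}
\|FRST^a_f(\cdot,\xi)\|_{H^1_\kappa}\le|A_\theta|\int_{\mathbb{R}}|g(x,\xi)|\left(\int_{\mathbb{R}}\sup_{t>0}\bigl|(f\ast\phi_t)(\tau-x)\bigr|\,\kappa(\tau)\,d\tau\right)dx.
\end{equation*}

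The key step comes next: in the inner integral I substitute $z=\tau-x$, which is a translation and leaves Lebesgue measure invariant but transforms the weight as $\kappa(\tau)=\kappa(z+x)$. Invoking the tempered-weight inequality $\kappa(z+x)\le(1+C|x|)^N\kappa(z)$ decouples the variables, giving
\begin{equation*}
\int_{\mathbb{R}}\sup_{t>0}\bigl|(f\ast\phi_t)(\tau-x)\bigr|\,\kappa(\tau)\,d\tau\le(1+C|x|)^N\int_{\mathbb{R}}\sup_{t>0}\bigl|(f\ast\phi_t)(z)\bigr|\,\kappa(z)\,dz=(1+C|x|)^N\|f\|_{H^1_\kappa}.
\end{equation*}
Plugging this back produces
\begin{equation*}
\|FRST^a_f(\cdot,\xi)\|_{H^1_\kappa}\le|A_\theta|\,\|f\|_{H^1_\kappa}\int_{\mathbb{R}}|g(x,\xi)|(1+C|x|)^N\,dx,
\end{equation*}
and an appeal to Lemma~\ref{lemma 3.1} bounds the remaining integral by $A_{\xi,N}$, yielding the stated estimate.

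I do not expect a serious obstacle: every ingredient has already been set up earlier in the paper (the $H^1$-argument of Theorem~\ref{thm 5.1} for the unweighted manipulations, the tempered-weight bound from the definition of $\mathscr{K}$, and Lemma~\ref{lemma 3.1} for the moment bound on $g$). The only point demanding a bit of care is ensuring the translation in the weighted integral is done after the supremum has been brought out, so that the factor $(1+C|x|)^N$ attaches cleanly to $|g(x,\xi)|$ and the remaining $z$-integral is exactly $\|f\|_{H^1_\kappa}$. Continuity of the operator follows at once from the linearity of $FRST^a_f$ in $f$ together with the norm bound just established.
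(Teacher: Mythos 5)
Your proposal is correct and follows essentially the same route as the paper's proof: expand the convolution, push the supremum and absolute value inside using condition (\ref{eq:2.4}), bound $|K_a|$ by $|A_\theta|$, translate the weighted integral so that $\kappa(\tau)=\kappa(z+x)\le(1+C|x|)^N\kappa(z)$, identify the inner integral as $\|f\|_{H^1_\kappa}$, and finish with Lemma \ref{lemma 3.1}. The only difference is a harmless reordering (you collapse to $(f\ast\phi_t)(\tau-x)$ before substituting, whereas the paper substitutes first), which does not change the argument.
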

\begin{proof}
We have
\begin{equation*}
\begin{split}
&\parallel FRST^a_f(\cdot,\xi)\parallel _{H_\kappa ^1}\\
&=\int_{\mathbb{R}} \sup_{t>0} \left|\left(FRST^a_f(\cdot,\xi)\ast \phi_t(\cdot) \right)(\tau)\right|\kappa(\tau)d\tau\\
&=\int_{\mathbb{R}} \sup_{t>0}\left|\int_{\mathbb{R}}g(x,\xi)\left(\int_{\mathbb{R}}f(\tau-x-y)  K_a(\tau-x-y,\xi)\phi_t(y)dy\right)dx\right|\kappa(\tau)d\tau\\
&\leq  \int_{\mathbb{R}}\left|g(x,\xi)\right|\left(\int_{\mathbb{R}}\sup_{t>0}\int_{\mathbb{R}}\left|f(\tau-x-y) K_a(\tau-x-y,\xi) \phi_t(y)\right|\kappa(\tau)dy\, d\tau\right)dx\\
&=  \int_{\mathbb{R}}\left|g(x,\xi)\right|\left(\int_{\mathbb{R}}\sup_{t>0}\int_{\mathbb{R}}\left|f(z-y) K_a(z-y,\xi) \phi_t(y)\right|\kappa(z+x)dz\, dy\right)dx\\
&\leq  \left|A_{\theta}\right|\int_{\mathbb{R}}\left|g(x,\xi)\right| \left(\int_{\mathbb{R}}\sup_{t>0}\int_{\mathbb{R}}\left|f(z-y) \phi_t(y)\right|(1+C|x|)^N \kappa(z)dz \right)dx\\
&=\left|A_{\theta}\right|\int_{\mathbb{R}}\left|g(x,\xi)\right|(1+C|x|)^N \left(\int_{\mathbb{R}}\sup_{t>0}\left|\left(f \ast \phi_t\right)(z)\right|\kappa(z)dz\right)dx\\
& \hspace{1 cm}(\text{by using equation} ~(\ref{eq:2.4})~~ \text{and then definition of convolution})\\
&= \left|A_{\theta}\right|\left\|f\right\|_{H^1_{\kappa}}\int_{\mathbb{R}}\left|g(x,\xi)\right|(1+C|x|)^N dx.
\end{split}
\end{equation*}
Now by using lemma \ref{lemma 3.1} we get
\begin{eqnarray*}
\parallel FRST^a_f(\cdot,\xi)\parallel _{H_\kappa ^1} \leq A_{\xi,N} \left|A_{\theta}\right| \left\|f\right\|_{H^1_{\kappa}}.
\end{eqnarray*}
\end{proof}
 
\bigskip
\address{Department of Mathematics,
Rajiv Gandhi University\\
Doimukh-791112, Arunachal Pradesh, India\\
$^1$ Email: kalitababy478@gmail.com\\
$^2$ Email: sks$\_$math@yahoo.com\\
$^\ast$ Corresponding author\\}
\end{document}